\numberwithin{equation}{section}
\newtheorem{thm}{Theorem}[section]
\newtheorem{lemma}[thm]{Lemma}
\newtheorem{prop}[thm]{Proposition}
\newtheorem{cor}[thm]{Corollary}
\theoremstyle{definition}
\newtheorem{defn}[thm]{Definition}
\theoremstyle{remark}
\newtheorem{eg}[thm]{Example}
\newtheorem{rmk}[thm]{Remark}
\newcommand{\id}{\operatorname{id}}
\newcommand{\dom}{\mathrm{dom}}
\newcommand{\ran}{\mathrm{ran}}
\title{Purely Infinite Totally Disconnected Topological Graph Algebras}
\date{24 Feb 2017}
\author{Hui Li}
\email{lihui8605@hotmail.com}
\address{Research Center for Operator Algebras and Shanghai Key Laboratory of Pure Mathematics and Mathematical Practice, Department of Mathematics, East China Normal University, 3663 Zhongshan North Road, Putuo District, Shanghai 200062, China
}
\subjclass[2010]{46L05}
\keywords{$C^*$-algebra; topological graph; Renault-Deaconu groupoid; groupoid $C^*$-algebra; pure infiniteness}
\thanks{This research was supported by Research Center for Operator Algebras of East China Normal University and supported by Science and Technology Commission of Shanghai Municipality (STCSM), grant No. 13dz2260400.}
\begin{document}

\begin{abstract}
We give a sufficient condition on totally disconnected topological graphs such that their associated topological graph algebras are purely infinite.
\end{abstract}

\maketitle

\section{Introduction}

The Elliott program developed rapidly over the last twenty years with the goal of classifying all simple separable nuclear $C^*$-algebras by the so-called Elliott's invariants (Notice that different types of classification might have different invariants). In the purely infinite case, Kirchberg in \cite{Kirchbergclassification} and Phillips in \cite{Phillips:DM00} separately showed that all simple separable nuclear purely infinite $C^*$-algebras in the UCT class can be classified by their K-theoretic data. Katsura in \cite{Katsura:JFA08} gave a sufficient condition on simple topological graph algebras such that they are purely infinite, and constructed all simple separable nuclear purely infinite $C^*$-algebras in the UCT class.

Graph algebras were firstly defined by Kumjian, Pask, Raeburn, and Renault in \cite{KumjianPaskEtAl:JFA97} using the groupoid $C^*$-algebras method. Topological graph algebras studied by Katsura (see \cite{Katsura:TAMS04}), which are seen to be a generalization of graph algebras, were defined by using a modified version of Pimsner's construction (see \cite{Pimsner:FIC97}). A few people then tried to realize topological graph algebras as groupoid $C^*$-algebras, and these results can naturally be regarded as a generalization of Kumjian, Pask, Raeburn, and Renault's approach to graph algebras in \cite{KumjianPaskEtAl:JFA97}. For example, Katsura in \cite{MR2563503} showed that the topological graph algebra of a compact topological graph with a surjective range map is isomorphic to a Renault-Deaconu groupoid $C^*$-algebra. Yeend in \cite{Yeend:CM06} proved that every topological graph algebra is indeed an \'{e}tale groupoid $C^*$-algebra. Kumjian and Li in \cite{KumjianLiTwisted2} strengthened Yeend's result by showing that every topological graph algebra is indeed a Renault-Deaconu groupoid $C^*$-algebra (Yeend's result, and Kumjian and Li's result both cover a result of Brownlowe, Carlsen, and Whittaker from \cite[Proposition~2.2]{BCW}).

In this article, we give a sufficient condition on totally disconnected topological graphs such that their associated topological graph algebras are purely infinite. Our approach is different than Katsura's. Our strategy is that we deal with topological graph algebras under the groupoid model, and we apply Anantharaman-Delaroche's criterion in \cite{MR1478030}, which yields purely infinite \'{e}tale groupoid $C^*$-algebras, to our settings.

This paper is organized as follows. In Section~2, we give a background review on topological graph algebras and state the main theorem of \cite{KumjianLiTwisted2}. In Section~3, we prove our main theorem, that is Theorem~\ref{sufficient condition 1 for boundary path groupoid loc con}, which is a sufficient condition giving rise to purely infinite topological graph algebras. In Section~4, we give some remarks on our main theorem.

\section{Preliminaries}

Throughout this paper, all the topological spaces are assumed to be second countable; and all the topological groupoids are assumed to be second countable. Our work in this article utilizes the Hilbert module, the $C^*$-correspondence, and the Cuntz-Pimsner algebra machinery. These materials can be referred to \cite{Katsura:JFA04, Lance:Hilbert$C*$-modules95, Pimsner:FIC97, RaeburnWilliams:Moritaequivalenceand98}, etc. This paper also involves groupoids, groupoid $C^*$-algebras, which can be found in \cite{Renault:groupoidapproachto80}.

\subsection{Topological Graph Algebras}

In this subsection, we recap some background about topological graphs and topological graph algebras from \cite{Katsura:TAMS04, KumjianLiTwisted2}.

\begin{defn}
Let $T$ be a locally compact Hausdorff space. Then $T$ is said to be \emph{totally disconnected} if $T$ has an open base consisting of compact open subsets of $T$.
\end{defn}

\begin{defn}[{\cite[Definition~2.1]{Katsura:TAMS04}}]\label{define the top graph}
A quadruple $E=(E^0,E^1,r,s)$ is called a \emph{topological graph} if $E^0, E^1$ are locally compact Hausdorff spaces, $r:E^1 \to E^0$ is a continuous map, and $s:E^1\to E^0$ is a local homeomorphism. In addition, $E$ is said to be \emph{totally disconnected} if $E^0, E^1$ are both totally disconnected.
\end{defn}

\begin{defn}[{\cite{Katsura:TAMS04}}]
Let $E$ be a topological graph. For $x,y \in C_c(E^1), f \in C_0(E^0), e \in E^1$, and for $v \in E^0$, define
\[
(x \cdot f)(e):=x(e)f(s(e));\ (f \cdot x)(e):=f(r(e))x(e); \text{ and } \langle x,y \rangle_{C_0(E^0)}(v):=\sum_{s(e)=v}\overline{x(e)}y(e).
\]
Then $C_c(E^1)$ is a right inner product $C_0(E^0)$-module with an adjointable left $C_0(E^0)$-action. Its completion $X(E)$ under the $\Vert\cdot\Vert_{C_0(E^0)}$-norm is called the \emph{graph correspondence} associated to $E$. The Cuntz-Pimsner algebra of $X(E)$, which is denote by $\mathcal{O}(E)$, is called the \emph{topological graph algebra} of $E$.
\end{defn}

A subset $N$ of $E^1$ is called an \emph{$s$-section} if $s\vert_{N}:N \to s(N)$ is a homeomorphism with respect to the subspace topologies.

Define some useful subsets of $E^0$ as follows. Define
\begin{enumerate}
\item $E_{\mathrm{sce}}^0:=E^0 \setminus \overline{r(E^1)}$.
\item $E_{\mathrm{fin}}^0 := \{v \in E^0 :$ there exists an open neighborhood $N$ of $v$ such
    that $r^{-1}(\overline{N})$ is compact $\}$.
\item $E_{\mathrm{rg}}^0:=E_{\mathrm{fin}}^0 \setminus \overline{E_{\mathrm{sce}}^0}$.
\item $E_{\mathrm{sg}}^0:=E^0 \setminus E_{\mathrm{rg}}^0$.
\end{enumerate}

For $n \geq 2$, define
\[
E^n:=\Big\{\mu =(\mu_1,\dots,\mu_n)\in \prod_{i=1}^{n}E^1: s(\mu_i)=r(\mu_{i+1}), i=1,\dots,n-1 \Big\}
\]
regarded as a subspace of the product space $\prod_{i=1}^{n}E^1$. Define the \emph{finite-path space} $E^*:=\coprod_{n=0}^{\infty} E^n$ with the disjoint union topology. Define the \emph{infinite-path space}
\[
E^\infty:=\Big\{\mu\in \prod_{i=1}^{\infty}E^1: s(\mu_i)=r(\mu_{i+1}), i=1,2,\dots \Big\}.
\]
Denote the length of a path $\mu \in E^* \amalg E^\infty$ by $\vert\mu\vert$.

A finite path $\mu \in E^* \setminus E^0$ is called a \emph{cycle} if $r(\mu)=s(\mu)$. The vertex $r(\mu)$ is called the \emph{base point} of $\mu$. The cycle $\mu$ is said to be \emph{without entrances} if $r^{-1}(r(\mu_i))=\{\mu_i\}$, for $i=1, \dots, \vert\mu\vert$. On the other hand, the cycle $\mu$ is said to \emph{have entrances} if there exists $1 \leq i \leq \vert\mu\vert$ such that $r^{-1}(r(\mu_i))\neq\{\mu_i\}$.

\begin{defn}[{\cite[Definition~5.4]{Katsura:TAMS04}}]
Let $E$ be a topological graph. Then $E$ is said to be \emph{topologically free} if the set of base points of cycles without entrances has empty interior.
\end{defn}

\begin{defn}[{\cite[Definitions~4.1, 4.7]{KumjianLiTwisted2}}]\label{def of boundary path of top graph}
Let $E$ be a topological graph. Define the \emph{boundary path space} to be $\partial E:=E^\infty \amalg \{\mu\in E^*:s(\mu) \in E_{\mathrm{sg}}^0 \}$. For a subset $S \subset E^*$, define the \emph{cylinder set} by $Z(S):=\{\mu \in \partial E: \text{ there exists } \alpha \in S, \text{ such that } \mu=\alpha\beta\}$.
Define a locally compact Hausdorff topology on $\partial E$ to be generated by the basic open sets $Z(U) \cap Z(K)^c$, where $U$ is an open set of $E^*$ and $K$ is a compact set of $E^*$.
\end{defn}

By \cite[Lemma~7.1]{KumjianLiTwisted2}, the one-sided shift map $\sigma:\partial E \setminus E_{\mathrm{sg}}^0 \to \partial E$ is a local homeomorphism.

\begin{defn}
Let $E$ be a topological graph and let $v \in E^0$. Define the \emph{positive orbit space} of $v$ (see \cite[Definition~4.1]{Katsura:ETDS06}) by
\[
\mathrm{Orb}^+(v):=\{w \in E^0: \text{ there exists } \nu \in E^*, \text{ such that } r(\nu)=w,s(\nu)=v \}.
\]
$E$ is said to be \emph{cofinal} if for any $\mu \in \partial E, \mathrm{Orb}^+(r(\mu)) \bigcup \Big(\bigcup_{i=1}^{\vert\mu\vert} \mathrm{Orb}^+(s(\mu_i)) \Big)$ is dense in $E^0$.
\end{defn}

\begin{rmk}
The cofinality for topological graphs is a generalization of the cofinality for directed graphs (see \cite{KumjianPaskEtAl:PJM98, KumjianPaskEtAl:JFA97}). Let $E$ be a directed graph, the cofinality of $E$ means that for any $v \in E^0$, and for any $\mu \in \partial E$, there exists $\nu \in E^*$, such that $r(\nu)=v, s(\nu) \in \{r(\mu)\} \bigcup \Big(\bigcup_{i=1}^{\vert\mu\vert}\{s(\mu_i) \} \Big)$.
\end{rmk}

\begin{thm}[{\cite[Proposition~8.9, Theorem~8.12]{Katsura:ETDS06}}]\label{simplicity}
Let $E$ be a topological graph. Then $\mathcal{O}(E)$ is simple if and only if $E$ is topologically free and cofinal.
\end{thm}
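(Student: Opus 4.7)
The plan is to work throughout in the groupoid picture. By the main theorem of \cite{KumjianLiTwisted2} recalled above, $\Oo(E)$ is naturally $*$-isomorphic to $C^*(\Gg_E)$, where $\Gg_E$ is the Renault--Deaconu groupoid associated with the local homeomorphism $\sigma:\partial E\setminus E_{\mathrm{sg}}^0\to\partial E$, and the unit space of $\Gg_E$ is canonically identified with $\partial E$. Because $\Gg_E$ is amenable (as can be extracted from Katsura's nuclearity proof for $\Oo(E)$, or via an AF-approximation of $\Gg_E$), full and reduced groupoid $C^*$-algebras coincide, and the standard Renault-style simplicity criterion applies: $C^*(\Gg_E)$ is simple if and only if $\Gg_E$ is both topologically principal and minimal. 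The task therefore reduces to matching these two groupoid conditions with topological freeness and cofinality respectively.

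First I would show that $\Gg_E$ is topologically principal exactly when $E$ is topologically free. A point $\mu\in\partial E$ has non-trivial isotropy precisely when there exist $k<\ell$ with $\sigma^k(\mu)=\sigma^\ell(\mu)$; unpacking this forces $\mu$ to be eventually periodic of the form $\nu\, c\, c\, c\cdots$ where $c$ is a cycle. If $c$ admits an entrance then one can use that entrance together with the shift $\sigma$ to produce, in every basic neighbourhood $Z(U)\cap Z(K)^c$ of $\mu$, boundary paths with trivial isotropy; these points are dense, so such $\mu$ do not obstruct topological principality. The remaining bad points project under $\mu\mapsto r(\mu)$ onto the set of base points of cycles without entrances, and since the cylinder-set topology on $\partial E$ is generated by range-type open sets, $\Gg_E$ is topologically principal iff that base-point set has empty interior in $E^0$ — which is exactly topological freeness.

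Next I would show that $\Gg_E$ is minimal exactly when $E$ is cofinal. By definition of the Renault--Deaconu groupoid, the orbit of $\mu\in\partial E$ consists of those $\eta$ admitting $k,\ell\ge 0$ with $\sigma^k(\eta)=\sigma^\ell(\mu)$. A basic open set $Z(U)\cap Z(K)^c$ meets this orbit iff some finite extension of a tail of $\mu$ lands in $U$; a routine density argument then rephrases ``every $\Gg_E$-orbit is dense in $\partial E$'' as ``for every $\mu\in\partial E$, the union $\mathrm{Orb}^+(r(\mu))\cup\bigcup_{i=1}^{|\mu|}\mathrm{Orb}^+(s(\mu_i))$ is dense in $E^0$'', which is precisely cofinality. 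One must check both directions pay attention to finite boundary paths, whose tails terminate at singular vertices and contribute the second union.

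The hard part I anticipate is the isotropy analysis together with the topology of $\partial E$: the boundary path space mixes infinite paths with finite paths ending at vertices in $E_{\mathrm{sg}}^0$, and one has to produce, inside an arbitrary basic open $Z(U)\cap Z(K)^c$, explicit aperiodic approximants even when $\mu$ is a pathological eventually-periodic finite boundary path. A secondary obstacle is the amenability of $\Gg_E$, which is not automatic from the étale structure and in practice requires either a filtration argument or a transfer via the known nuclearity of $\Oo(E)$; once that is in hand, the Renault criterion closes the proof.
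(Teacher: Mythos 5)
The paper offers no proof of this theorem: it is imported verbatim from Katsura, whose own argument runs through the Cuntz--Pimsner presentation of $\mathcal{O}(E)$ (gauge-invariant ideal structure and the Cuntz--Krieger uniqueness theorem). Your proposal takes a genuinely different route, through the groupoid model $\mathcal{O}(E)\cong C^*(\Gamma(\partial E,\sigma))$ of Theorem~\ref{groupid model for twisted top graph alg}, and it is in fact the route most consistent with the methodology of the rest of the paper: half of your program is already carried out here, since Proposition~\ref{top free iff ess free} is precisely the equivalence of topological freeness with essential freeness (topological principality) of the boundary path groupoid, proved by the same isotropy analysis you sketch. Amenability of $\Gamma(\partial E,\sigma)$ is not a real obstacle either --- Renault--Deaconu groupoids of partial local homeomorphisms are amenable, so full and reduced algebras coincide and the standard simplicity criterion for amenable \'etale groupoids (topologically principal plus minimal) applies. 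What your route buys is uniformity and the reuse of Proposition~\ref{top free iff ess free}; what it costs is the step you call ``routine'', namely minimality of $\Gamma(\partial E,\sigma)$ if and only if cofinality of $E$, which is where the substantive work hides. Two points there need genuine care: (i) the orbit of an infinite boundary path consists only of infinite paths, so you must show cofinality rules out a nonempty open subset of $\partial E$ consisting entirely of finite boundary paths --- this is exactly the delicacy with basic sets $Z(U)\cap Z(K)^c$ that Lemmas~\ref{Technical Lemma 1} and~\ref{Technical Lemma 2} are designed to handle, and the latter needs $E^0_{\mathrm{sce}}=\emptyset$; and (ii) translating between density of an orbit in $\partial E$ and density of $\mathrm{Orb}^+(r(\mu))\cup\bigcup_i\mathrm{Orb}^+(s(\mu_i))$ in $E^0$ uses surjectivity of $r:\partial E\to E^0$ and the fact that the sets $Z(U)$ with $U\subset E^0$ open detect density in $E^0$. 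With those two points supplied your argument closes, so I regard this as a viable alternative proof rather than a failed one, but as written the minimality half is a sketch rather than a proof.
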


\subsection{Groupoid $C^*$-algebras}

In this subsection, we state one of the main theorems of \cite{KumjianLiTwisted2}.

A topological groupoid is called a \emph{locally compact groupoid} if its topology is locally compact Hausdorff. A locally compact groupoid is said to be \emph{\'{e}tale} if its range map is a local homeomorphism.

Let $\Gamma$ be a locally compact groupoid and let $N \subset \Gamma$. Then $N$ is called an \emph{$s$-section} if $s\vert_{N}:N \to s(N)$ is a homeomorphism with respect to the subspace topologies; $N$ is called an \emph{$r$-section} if $r\vert_{N}:N \to r(N)$ is a homeomorphism with respect to the subspace topologies; and $N$ is called a \emph{bisection} if $s\vert_{N}, r\vert_{N}$ are both homeomorphisms with respect to the subspace topologies.

\begin{defn}[{\cite[Definition~2.4]{Renault:00}}]\label{define D-R groupoid}
Let $T$ be a locally compact Hausdorff space and let $\sigma:\dom(\sigma) \to \ran(\sigma)$ be a partial local homeomorphism. Define the \emph{Renault-Deaconu groupoid} $\Gamma(T,\sigma)$ as follows:
\begin{align*}
\Gamma(T,\sigma):=\{(t_1,k_1-k_2,t_2) \in T \times \mathbb{Z} \times T :& k_1,k_2 \geq 0,t_1 \in \dom(\sigma^{k_1}),
\\&t_2 \in \dom(\sigma^{k_2}), \sigma^{k_1}(t_1)=\sigma^{k_2}(t_2) \}.
\end{align*}
Define the unit space $\Gamma^0:=\{(t,0,t):t \in T\}$. For $(t_1,n,t_2),(t_2,m,t_3) \in \Gamma(T,\sigma)$, define the multiplication, the inverse, the source and the range map by
\[
(t_1,n,t_2)(t_2,m,t_3):=(t_1,n+m,t_3); (t_1,n,t_2)^{-1}:=(t_2,-n,t_1);
\]
\[
 r(t_1,n,t_2):=(t_1,0,t_1); s(t_1,n,t_2):=(t_2,0,t_2).
\]
Define the topology on $\Gamma(T,\sigma)$ to be generated by the basic open set
\[
\mathcal{U}(U,V,k_1,k_2):=\{(t_1,k_1-k_2,t_2):t_1 \in U,t_2 \in V,\sigma^{k_1}(t_1)=\sigma^{k_2}(t_2)\},
\]
where $U \subset \dom(\sigma^{k_1}), V \subset \dom(\sigma^{k_2})$ are open in $T,\sigma^{k_1}$ is injective on $U$, and $\sigma^{k_2}$ is injective on $V$.
\end{defn}

\begin{defn}[{\cite[Definition~7.5]{KumjianLiTwisted2}}]\label{boundary-path gpoid}
Let $E$ be a topological graph. Define the \emph{boundary path groupoid} to be the Renault-Deaconu groupoid
\[
\Gamma(\partial E,\sigma):=\{(\mu,k-l,\nu) \in \partial E \times \mathbb{Z} \times \partial E:\mu \in \dom(\sigma^{k}),\nu \in \dom(\sigma^l),\sigma^{k}(\mu)=\sigma^l(\nu)\}.
\]
\end{defn}

The following theorem is a special case of \cite[Theorem~7.6]{KumjianLiTwisted2}.

\begin{thm}\label{groupid model for twisted top graph alg}
Let $E$ be a topological graph. Then $\mathcal{O}(E)$ is isomorphic to the groupoid $C^*$-algebra $C^*(\Gamma(\partial E,\sigma))$.
\end{thm}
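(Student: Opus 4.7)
The plan is to build an explicit isomorphism by exhibiting a Cuntz--Pimsner covariant Toeplitz representation of the graph correspondence $X(E)$ inside $C^*(\Gamma(\partial E,\sigma))$, and then promoting the induced surjection $\mathcal{O}(E)\twoheadrightarrow C^*(\Gamma(\partial E,\sigma))$ to an isomorphism via the gauge-invariant uniqueness theorem for Cuntz--Pimsner algebras.

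First I would identify the unit space $\Gamma(\partial E,\sigma)^0$ with $\partial E$ and embed $C_0(E^0)$ into $C_c(\Gamma(\partial E,\sigma)^0)$ by $\pi^0(f)(\mu):=f(r(\mu))$. For $x\in C_c(E^1)$ whose support lies in an open $s$-section $N\subset E^1$, I would define $\pi^1(x)$ to be the function on $\Gamma(\partial E,\sigma)$ supported on the open bisection $\mathcal{U}(Z(N),Z(s(N)),1,0)=\{(e\mu,1,\mu):e\in N,\ s(e)=r(\mu)\in \partial E\}$ by $\pi^1(x)(e\mu,1,\mu):=x(e)$, and extend linearly using a partition of unity subordinate to an $s$-section cover of $\supp(x)$. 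Routine checks on generators, using that the convolution on $C_c(\Gamma(\partial E,\sigma))$ reduces to summation over fibres of $\sigma$, give the Toeplitz relations $\pi^1(f\cdot x)=\pi^0(f)\pi^1(x)$, $\pi^1(x\cdot f)=\pi^1(x)\pi^0(f)$, and $\pi^1(x)^*\pi^1(y)=\pi^0(\langle x,y\rangle)$; the last identity is where the sum in the definition of $\langle\cdot,\cdot\rangle_{C_0(E^0)}$ is matched with the groupoid convolution. For Cuntz--Pimsner covariance at $E_{\mathrm{rg}}^0$, I would use that any $\mu\in\partial E$ with $r(\mu)\in E_{\mathrm{rg}}^0$ satisfies $\mu\in\dom(\sigma)$ and that $r^{-1}$ of a small neighborhood is compact, which lets one realize $\pi^0(f)$ as a genuine compact-operator image whenever $f\in C_0(E_{\mathrm{rg}}^0)$.

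By universality I obtain a $*$-homomorphism $\pi:\mathcal{O}(E)\to C^*(\Gamma(\partial E,\sigma))$. Surjectivity reduces to showing that the $*$-subalgebra generated by the images of $C_0(E^0)$ and $C_c(E^1)$ is dense in $C_c(\Gamma(\partial E,\sigma))$; this follows because products $\pi^1(x_{\mu_1})\cdots\pi^1(x_{\mu_k})\pi^0(f)\pi^1(x_{\nu_l})^*\cdots\pi^1(x_{\nu_1})^*$ have supports covering the basic open sets $\mathcal{U}(U,V,k,l)$ of Definition~\ref{define D-R groupoid}. Injectivity comes from the gauge-invariant uniqueness theorem of Katsura for topological graph algebras: the integer-valued cocycle $c(\mu,n,\nu):=n$ on $\Gamma(\partial E,\sigma)$ induces a $\mathbb{T}$-action on $C^*(\Gamma(\partial E,\sigma))$ that intertwines the canonical gauge action on $\mathcal{O}(E)$, and the map $\pi^0$ is injective on $C_0(E^0)$. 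The main obstacle I expect is the covariance check at $E_{\mathrm{rg}}^0$: one has to argue, using Definition~\ref{def of boundary path of top graph} and the local-homeomorphism property of $\sigma$, that the characteristic projection obtained from the groupoid picture genuinely realizes the image of $\phi(f)\in\mathcal{K}(X(E))$ for $f\in C_0(E_{\mathrm{rg}}^0)$, so some care with compact $r$-fibres and refinements by $s$-sections is required.
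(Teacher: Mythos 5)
The paper offers no proof of this theorem: it is quoted verbatim as a special case of \cite[Theorem~7.6]{KumjianLiTwisted2}. Your outline --- a Cuntz--Pimsner covariant representation of $X(E)$ supported on bisections $\mathcal{U}(Z(N),Z(s(N)),1,0)$ of $\Gamma(\partial E,\sigma)$, followed by Katsura's gauge-invariant uniqueness theorem via the canonical $\mathbb{Z}$-valued cocycle --- is essentially the same strategy that the cited reference carries out (in the untwisted case), and it is correct; the only substantive points you leave implicit are the surjectivity of $r\colon\partial E\to E^0$ (needed for injectivity of $\pi^0$, since $\pi^0(f)=f\circ r$) and the covariance computation on $E_{\mathrm{rg}}^0$, which you already flag as the delicate step.
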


\section{Sufficient Conditions of Purely Infinite Topological Graph Algebras}

\begin{defn}[{\cite{RordamStormer:Classificationofnuclear02}}]\label{define purely infinite}
Let $A$ be a $C^*$-algebra. Then $A$ is said to be \emph{purely infinite} if every nonzero hereditary $C^*$-subalgebra of $A$ contains an infinite projection.
\end{defn}

We firstly prove the following two technical lemmas.

\begin{lemma}\label{Technical Lemma 1}
Let $E$ be a topological graph. Fix an open set $U \subset E^*$, and fix a compact set $K \subset E^*$ satisfying $Z(U) \cap Z(K)^c\neq\emptyset$. Write $K=\bigcup_{i=0}^{k}(K\cap E^i)$ for some $k \geq 0$. Then for any $\mu \in Z(U) \cap Z(K)^c$ with $\vert\mu\vert\geq k$, there exists an open subset $V$ of $E^*$ such that $\mu \in Z(V) \subset Z(U) \cap Z(K)^c$. In particular, if $\vert\mu\vert=k$, then $V$ can be chosen to be an open neighborhood of $\mu$ in $E^{\vert\mu\vert}$.
\end{lemma}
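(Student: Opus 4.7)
The plan is to exploit the truncation maps on the finite-path spaces $E^n$. Since $\mu \in Z(U)$, I first fix a witness $\alpha' \in U$ with $\mu = \alpha'\beta'$ and set $m := |\alpha'|$. Because the constraint coming from $U$ sits at length $m$ while the constraints coming from $K$ sit at lengths $\leq k$, I would work at the common level $M := \max(m,k)$; since $M \leq |\mu|$ the truncation $\alpha := \mu^{(M)} \in E^M$ is well defined, and this is the natural point around which to build the desired neighborhood.

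For each $i \in \{0,1,\dots,M\}$ let $T_i : E^M \to E^i$ denote the truncation to the first $i$ coordinates (with $T_0$ the range map when $M\geq 1$, or the identity when $M=0$). These maps are continuous because $E^M$ inherits the subspace topology from $(E^1)^M$. I would then set
\[
V := T_m^{-1}(U\cap E^m) \;\cap\; \bigcap_{i=0}^{k} \bigl(E^M \setminus T_i^{-1}(K\cap E^i)\bigr).
\]
Since $K$ is compact and each $E^i$ is Hausdorff, $K \cap E^i$ is closed in $E^i$, so $V$ is open in $E^M$, and therefore open in $E^*$ because $E^M$ is clopen in the disjoint union $E^* = \coprod_n E^n$.

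The verification then splits into three routine checks. First, $\alpha \in V$: $T_m(\alpha) = \alpha' \in U$ by construction, and $T_i(\alpha) = \mu^{(i)} \notin K$ for each $i \leq k$, since otherwise $\mu$ would lie in $Z(K)$, contradicting the hypothesis $\mu \in Z(K)^c$; consequently $\mu \in Z(V)$. Second, $Z(V) \subset Z(U)$: any $\nu = \gamma\delta$ with $\gamma \in V$ satisfies $\nu^{(m)} = \gamma^{(m)} \in U$. Third, $Z(V) \cap Z(K) = \emptyset$: any initial segment of $\nu$ lying in $K$ must have length $\leq k$ (because $K \subset \bigcup_{i=0}^{k} E^i$) and must coincide with $T_j(\gamma)$ for some $j \leq k$, contradicting $\gamma \in E^M \setminus T_j^{-1}(K \cap E^j)$. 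The ``in particular'' clause follows because $|\mu| = k$ forces $m \leq k$, hence $M = k$ and $V$ becomes an open neighborhood of $\mu = \alpha$ inside $E^{|\mu|}$.

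I do not anticipate a genuine obstacle; the argument is a topological bookkeeping exercise. The only subtlety worth flagging is the choice $M = \max(m,k)$ rather than simply $M = k$: \emph{a priori} the witness $\alpha' \in U$ could be longer than $k$, in which case a neighborhood of $\mu^{(k)}$ taken inside $E^k$ cannot capture the $U$-condition, and one is forced to build the neighborhood at the higher level $E^M$.
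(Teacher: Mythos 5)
Your construction is correct and is essentially the paper's argument in a cleaner package: the paper likewise builds the neighborhood at level $\max(|\alpha|,k)$ by intersecting a pullback of $U$ with sets of the form $N_i\times E^{k-i}$ (which are precisely preimages under your truncation maps $T_i$ of neighborhoods avoiding $K\cap E^i$), only it does so via an explicit case analysis on $K=\emptyset$, $K\subset E^0$, and the relative sizes of $|\alpha|$ and $k$. Your uniform formula $V=T_m^{-1}(U\cap E^m)\cap\bigcap_{i=0}^{k}\bigl(E^M\setminus T_i^{-1}(K\cap E^i)\bigr)$ subsumes all of those cases at once, and your verification (including the ``in particular'' clause when $|\mu|=k$ forces $M=k$) is sound.
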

\begin{proof}
We prove the first statement. Write $\mu=\alpha\beta$ where $\alpha \in U$.

Case $1$. $K=\emptyset$. Let $V:=U$. Then we are done.

Case $2$. $K \neq\emptyset$ and $K \subset E^0$. Then $r(\mu) \notin K$. Take an open neighborhood $N$ of $r(\mu)$ which does not intersect with $K$. Let $V:=(r^{\vert\alpha\vert})^{-1}(N) \cap U$. We have $\mu\in Z(V) \subset Z(U) \cap Z(K)^c$.

Case $3$. $K\neq\emptyset$ and $K \not\subset E^0$. Then $k \geq 1$. Since $\mu \in Z(K)^c$, then for $1 \leq i \leq k$ there exists an open neighborhood $N_i \subset E^i$ of $\mu_1\dots\mu_i$ such that $N_i$ does not intersect with $K \cap E^i$ and $r(N_i)$ does not intersect with $K \cap E^0$. Let
\begin{align*}
V:= \begin{cases}
   \Big(\bigcap_{i=1}^{k-1}(N_i \times E^{k-i})\Big) \cap N_k \cap ((U \cap E^{\vert\alpha\vert})\times E^{k-\vert\alpha\vert}) &\text{ if $k>\vert\alpha\vert >0$} \\
        \Big(\bigcap_{i=1}^{k}(N_i \times E^{\vert\alpha\vert-i})\Big) \cap (U \cap E^{\vert\alpha\vert})   &\text{ if $\vert\alpha\vert>k$} \\
      \Big(\bigcap_{i=1}^{k-1}(N_i \times E^{k-i})\Big) \cap N_k \cap (U \cap E^{\vert\alpha\vert})   &\text{ if $\vert\alpha\vert =k$} \\
    \Big(\bigcap_{i=1}^{k-1}(N_i \times E^{k-i})\Big) \cap N_k \cap (r^k)^{-1}(U \cap E^{\vert\alpha\vert})   &\text{ if $\vert\alpha\vert =0$.}
\end{cases}
\end{align*}
Then $V$ is an open subset of $E^{\max\{\vert\alpha\vert,k\}}$ and $\mu\in Z(V) \subset Z(U) \cap Z(K)^c$.

We prove the second statement. If $K=\emptyset$, let
\begin{align*}
V:= \begin{cases}
   (U \cap E^{\vert\alpha\vert}) \times E^{\vert\beta\vert} &\text{ if $\vert\alpha\vert>0,\vert\beta\vert >0$} \\
   (r^{k})^{-1}(U) &\text{ if $\vert\alpha\vert=0,\vert\beta\vert >0$} \\
        U \cap E^{k} &\text{ if $\vert\beta\vert =0$.}
\end{cases}
\end{align*}
Then $V$ is an open neighborhood of $\mu$ in $E^{\vert\mu\vert}$, and $Z(V) \subset Z(U) \cap Z(K)^c$. If $K\neq\emptyset$, then it follows directly from the above construction.
\end{proof}

\begin{lemma}\label{Technical Lemma 2}
Let $E$ be a topological graph. Suppose that $E_{\mathrm{sce}}^0=\emptyset$. Then for any open set $U \subset E^*$ and any compact set $K \subset E^*$ satisfying $Z(U) \cap Z(K)^c\neq\emptyset$, there exists an open subset $V$ of $E^*$ such that $\emptyset\neq Z(V) \subset Z(U) \cap Z(K)^c$.
\end{lemma}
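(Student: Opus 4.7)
The plan is to mimic the case structure of Lemma~\ref{Technical Lemma 1} and reduce to that lemma. Fix $\mu\in Z(U)\cap Z(K)^c$, write $\mu=\alpha\beta$ with $\alpha\in U$, and let $k$ be the maximum length of a path in $K$. If $\vert\mu\vert\geq k$, Lemma~\ref{Technical Lemma 1} directly produces an open $V$ with $\mu\in Z(V)\subset Z(U)\cap Z(K)^c$, so the real work is the case $\vert\mu\vert<k$, in which $\mu$ is finite and $s(\mu)\in E^0_{\mathrm{sg}}$. There I will use $E^0_{\mathrm{sce}}=\emptyset$ to manufacture some $\mu'\in Z(U)\cap Z(K)^c$ with $\vert\mu'\vert\geq k$ and then apply Lemma~\ref{Technical Lemma 1} to $\mu'$.

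Two consequences of $E^0_{\mathrm{sce}}=\emptyset$ drive the construction. First, $r(E^1)$ is dense in $E^0$. Second, $E^0_{\mathrm{rg}}=E^0_{\mathrm{fin}}$ is contained in $r(E^1)$: picking an open $N\ni v$ with $r^{-1}(\overline{N})$ compact, the set $\overline{N}\cap r(E^1)=r(r^{-1}(\overline{N}))$ is closed, and $v\in\overline{r(E^1)}\cap N$ then forces $v\in r(E^1)$. Consequently every finite path in $E^*$ admits an extension to an element of $\partial E$, by appending edges so long as the current source lies in $E^0_{\mathrm{rg}}$.

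To build $\mu'$ when $n:=\vert\mu\vert<k$, I first perturb $\mu$ to a nearby $\tilde\mu\in E^n$ with $s(\tilde\mu)\in r(E^1)$, and then extend. Shrink each $\mu_i$ to a neighborhood $W_i\subset E^1$ on which $s$ is a homeomorphism onto its image. By density, pick $v_n\in r(E^1)\cap s(W_n)$ close to $s(\mu)$, let $\tilde\mu_n:=(s\vert_{W_n})^{-1}(v_n)$, and back-propagate $\tilde\mu_i:=(s\vert_{W_i})^{-1}(r(\tilde\mu_{i+1}))$ for $i=n-1,\dots,1$ (the required $r(\tilde\mu_{i+1})\in s(W_i)$ holds by continuity once $v_n$ is close enough). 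Shrinking the $W_i$'s further forces $\tilde\mu_1\cdots\tilde\mu_{\vert\alpha\vert}\in U$ and $\tilde\mu_1\cdots\tilde\mu_j\notin K$ for every $j\leq n$, using openness of $U$ and compactness of each slice $K\cap E^j$. Appending an edge with range $v_n$ produces a strictly longer path, and iterating this perturb-and-extend step at most $k-n$ more times yields the desired $\mu'\in\partial E\cap Z(U)\cap Z(K)^c$ of length at least $k$.

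The main obstacle is propagating the constraints through the iteration: each perturbation disturbs the edges to its left, and each appended edge must dodge the compact slice of $K$ at its level. Both difficulties are handled by working inside a fixed basic neighborhood $Z(W')\cap Z(K')^c\subseteq Z(U)\cap Z(K)^c$ of $\mu$ and making all choices small enough at each stage that the cumulative modifications over the finitely many iterations keep the evolving path inside this neighborhood; once $\mu'$ is in hand, Lemma~\ref{Technical Lemma 1} applied to $\mu'$ delivers the required $V$.
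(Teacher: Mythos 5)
Your overall reduction is sound and is essentially the paper's: fix $\mu\in Z(U)\cap Z(K)^c$, note that only the case $\vert\mu\vert<k$ requires work, shrink to a neighborhood $V\subset E^{\vert\mu\vert}$ of $\mu$ handling the levels $\leq\vert\mu\vert$ via Lemma~\ref{Technical Lemma 1}, and then extend past the top level of $K$. Your two preliminary observations (density of $r(E^1)$, and $E^0_{\mathrm{fin}}\cap\overline{r(E^1)}\subset r(E^1)$) are also correct.

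The gap is in the mechanism for making the appended edges dodge $K$. Density of $r(E^1)$ only guarantees that some vertex near $s(\mu)$ receives \emph{at least one} edge; that edge may well be the $(\vert\mu\vert+1)$-th edge of a path in $K$, and ``making all choices small enough'' cannot help, because at level $\vert\mu\vert+1$ there is no piece of $\mu$ left to stay close to --- smallness controls membership in $U$ and avoidance of $K$ at levels $\leq\vert\mu\vert$, but says nothing about the new edge. You record that $s(\mu)\in E^0_{\mathrm{sg}}$ but never use it; this is precisely the missing ingredient. Since $E^0_{\mathrm{sce}}=\emptyset$ forces $E^0_{\mathrm{sg}}=E^0\setminus E^0_{\mathrm{fin}}$, the set $r^{-1}(N)$ is non-compact for every neighborhood $N$ of $s(\mu)$, whereas the set $F:=\{\alpha_{\vert\mu\vert+1}:\alpha\in K,\ \vert\alpha\vert\geq\vert\mu\vert+1\}$ is compact; hence one can pick $e\in r^{-1}(s(V))\setminus F$ and an open $W\ni e$ disjoint from $F$. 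This is how the paper proceeds, and it has a second payoff that your scheme lacks: a boundary path whose $(\vert\mu\vert+1)$-th edge avoids $F$ automatically avoids \emph{every} level of $K$ above $\vert\mu\vert$, so a single extension step suffices and $O:=(V\times W)\cap E^{\vert\mu\vert+1}$ already works. Your iteration up to length $k$ is not merely inefficient but unjustified at the later steps: after the first appended edge the new source need not be singular, its set of incoming edges may be compact (even a single edge leading straight into $K$), and the perturbation freedom is now constrained by all earlier choices, so no argument you give rules out getting funneled into $K$ at levels $\vert\mu\vert+2,\dots,k$.
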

\begin{proof}
The assumption $E_{\mathrm{sce}}^0=\emptyset$ gives $E_{\mathrm{sg}}^0=E^0 \setminus E_{\mathrm{fin}}^0$ $E^0=E_{\mathrm{rg}}^0 \cup (E_{\mathrm{fin}}^0)^c$. Fix an open set $U \subset E^*$ and fix a compact set $K \subset E^*$ satisfying $Z(U) \cap Z(K)^c\neq\emptyset$. Write $K=\bigcup_{i=0}^{k}(K\cap E^i)$, for some $k \geq 0$.  Fix $\mu \in Z(U) \cap Z(K)^c$. If $\vert\mu\vert\geq k$ then by Lemma~\ref{Technical Lemma 1} we are done. So we may assume that $\vert\mu\vert <k$. By Lemma~\ref{Technical Lemma 1}, there exists an open neighborhood $V \subset E^{\vert\mu\vert}$ of $\mu$ such that $Z(V) \subset Z(U) \cap Z(\bigcup_{i=0}^{\vert\mu\vert}(K\cap E^i))^c$. Then $s(V)$ is an open neighborhood of $s(\mu)$. Since $\mu \in\partial E$ and $E_{\mathrm{sg}}^0=E^0 \setminus E_{\mathrm{fin}}^0$, we have $s(\mu) \in E^0 \setminus E_{\mathrm{fin}}^0$. Consider the set
\[
F:=\{\alpha_{\vert\mu\vert+1} \in E^1: \alpha \in K, \vert\alpha\vert\geq\vert\mu\vert+1\},
\]
which is a compact subset of $E^1$. Then there exists $e \in r^{-1}(s(V))$ such that $e \notin F$. Take an open neighborhood $W \subset E^1$ of $e$ which does not intersect with $F$. Let
\begin{align*}
O:= \begin{cases}
   (V \times W) \cap E^{\vert\mu\vert+1} &\text{ if $\vert\mu\vert>0$} \\
   W &\text{ if $\vert\mu\vert=0$.}
\end{cases}
\end{align*}
So $\emptyset\neq Z(O) \subset Z(U) \cap Z(K)^c$.
\end{proof}

\begin{defn}[{\cite[Page~202]{MR1478030}}]
Let $\Gamma$ be an \'{e}tale groupoid. Then $\Gamma$ is said to be \emph{essentially free} if the set of elements in $\Gamma^0$ whose isotropy group are trivial form a dense subset of $\Gamma^0$.
\end{defn}

\begin{lemma}\label{equ defn of essentially free}
Let $T$ be a locally compact Hausdorff space and let $\sigma:\dom(\sigma) \to \ran(\sigma)$ be a partial local homeomorphism. Then the Renault-Deaconu groupoid $\Gamma(T,\sigma)$ is essentially free if and only if the set $\{t \in T: t, \sigma(t), \dots \text{ are distinct } \}$ is dense in $T$.
\end{lemma}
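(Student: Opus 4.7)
The plan is to unwind both sides of the biconditional down to statements about the forward orbit $t, \sigma(t), \sigma^2(t), \ldots$, after which the equivalence is essentially automatic.

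First I would identify the unit space $\Gamma^0$ with $T$ via the bijection $t \mapsto (t,0,t)$, and check that this is a homeomorphism onto $\Gamma^0$ in its subspace topology. This is immediate from Definition~\ref{define D-R groupoid}: the intersection $\mathcal{U}(U,V,k_1,k_2) \cap \Gamma^0$ is empty whenever $k_1 \neq k_2$ and equals $\{(t,0,t) : t \in U \cap V \cap \dom(\sigma^{k_1})\}$ when $k_1 = k_2$, while conversely every open set $U \subset T$ is recovered as $\mathcal{U}(U,T,0,0) \cap \Gamma^0$. Consequently density in $\Gamma^0$ translates to density in $T$ and vice versa.

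The substantive step is to characterize when the isotropy at a unit $(t,0,t)$ is trivial. Directly from the definition of $\Gamma(T,\sigma)$, this isotropy consists of all triples $(t, k_1-k_2, t)$ with $k_1, k_2 \geq 0$, $t \in \dom(\sigma^{k_1}) \cap \dom(\sigma^{k_2})$, and $\sigma^{k_1}(t) = \sigma^{k_2}(t)$; the unit itself corresponds precisely to $k_1 = k_2$. Hence trivial isotropy at $t$ is equivalent to demanding that any equation $\sigma^{k_1}(t) = \sigma^{k_2}(t)$ within the allowable range force $k_1 = k_2$, which is exactly the assertion that $t, \sigma(t), \sigma^2(t), \ldots$ (for as many iterates as are defined) are pairwise distinct.

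Combining the two steps proves the lemma. I do not expect any serious obstacle; the one thing to execute carefully is both directions of the isotropy characterization --- producing a genuine non-identity isotropy element from any repetition $\sigma^{k_1}(t) = \sigma^{k_2}(t)$ with $k_1 \neq k_2$, and conversely ruling out every such element when the iterates are distinct --- but this falls straight out of Definition~\ref{define D-R groupoid}.
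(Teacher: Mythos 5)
Your argument is correct and complete; the paper itself dismisses this lemma with ``It is straightforward to see,'' and your two steps --- identifying $\Gamma^0$ with $T$ as topological spaces via $t \mapsto (t,0,t)$, and reading off from Definition~\ref{define D-R groupoid} that the isotropy at $(t,0,t)$ is trivial exactly when the defined iterates $t,\sigma(t),\dots$ are pairwise distinct --- are precisely the routine verification being alluded to. There is no gap, and your approach is the same one the paper intends.
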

\begin{proof}
It is straightforward to see.
\end{proof}

\begin{defn}
Let $E$ be a topological graph. Then $E$ is said to be \emph{essentially free} if the boundary path groupoid $\Gamma(\partial E,\sigma)$ is essentially free.
\end{defn}

The following proposition is a generalization of \cite[Lemma~3.4]{KumjianPaskEtAl:PJM98}.

\begin{prop}\label{top free iff ess free}
Let $E$ be a topological graph. Then $E$ is topologically free if and only if $E$ is essentially free.
\end{prop}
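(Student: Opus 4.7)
By Lemma~\ref{equ defn of essentially free}, essential freeness of $E$ amounts to density in $\partial E$ of the set $\Omega := \{\mu \in \partial E : \mu, \sigma(\mu), \sigma^2(\mu), \ldots \text{ are pairwise distinct}\}$. A finite boundary path has shift iterates of strictly decreasing length, so every finite boundary path lies in $\Omega$; consequently $\Omega^c$ consists precisely of the eventually periodic infinite boundary paths, and the task reduces to showing that topological freeness is equivalent to nowhere-density of the eventually periodic infinite paths.

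For the contrapositive of $(\Leftarrow)$, assume the set $P \subset E^0$ of base points of cycles without entrances has nonempty interior $U$. For $v \in U$, iterating the defining equality $r^{-1}(v) = \{\gamma(v)_1\}$ (and similarly at each vertex on the cycle) forces the unique infinite boundary path with range $v$ to be the periodic path $\gamma(v)^\infty$. Fixing $v_0 \in U$ with cycle $\gamma$ of length $\ell$, I choose an open neighborhood $V \subset E^\ell$ of $\gamma$ with $r(V) \subset U$, so that every infinite path in $Z(V)$ is eventually periodic. The remaining task is to excise the finite boundary paths lying in $Z(V)$---these are prefixes of the various $\gamma(v)^\infty$ whose source is a singular vertex on the cycle---by intersecting with $Z(K)^c$ for a suitable compact $K \subset E^*$; the resulting nonempty open set lies entirely in $\Omega^c$, contradicting density of $\Omega$.

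For $(\Rightarrow)$, let $W = Z(V_0) \cap Z(K)^c$ be a nonempty basic open subset of $\partial E$ with $K \subset \bigcup_{i=0}^k E^i$, and pick $\mu \in W$. If $\mu$ is finite, then $\mu \in \Omega \cap W$ and we are done. If $\mu$ is infinite, Lemma~\ref{Technical Lemma 1} supplies an open $V \subset E^n$ for some $n \geq k$ with $\mu_1 \cdots \mu_n \in V$ and $Z(V) \subset W$. Because $s: E^1 \to E^0$ is a local homeomorphism, so is $s^n: E^n \to E^0$; hence $s^n(V)$ is open in $E^0$ and contains $s(\mu_n)$. Topological freeness then produces $v \in s^n(V) \setminus P$, and lifting along $s^n$ yields $\xi \in V$ with $s(\xi) = v$. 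I complete the proof by extending $\xi$ to some $\mu' \in \Omega \cap Z(V)$: if $v \in E_{\mathrm{sg}}^0$, take $\mu' := \xi$, a finite boundary path; if $v \in E_{\mathrm{rg}}^0$, extend $\xi$ inductively by one edge at a time, at each stage invoking topological freeness at the current source vertex to pick an edge that steers the extension away from any candidate period.

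The main obstacle is the last inductive construction: producing an honest aperiodic infinite extension in the case $v \in E_{\mathrm{rg}}^0$. The difficulty is that a single perturbation near $v$ need not suffice---after deviating from one emerging cycle, the extension might accidentally fall into another---so one must diagonalize over the pairs $(i,j)$ with $i < j$, at each stage applying topological freeness at the current successor vertex to rule out a specific candidate period while preserving membership in $\partial E$. The fact that every vertex on the path's tail has $r^{-1}(\cdot) \neq \emptyset$ (forced by $E_{\mathrm{rg}}^0 \subset r(E^1)$ closure arguments) is what keeps the inductive extension alive.
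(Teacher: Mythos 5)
There is a genuine gap in each direction, and the more serious one is in the implication ``topologically free $\Rightarrow$ essentially free.'' You try to prove this directly by threading an aperiodic boundary path through every basic open set $Z(V)$, $V\subset E^n$. The step that fails is the final inductive extension: topological freeness is a statement that the set $P$ of base points of cycles without entrances has empty \emph{interior}; it gives you no pointwise freedom at an individual vertex. Knowing $s(\xi)=v\notin P$ does not provide a second edge in $r^{-1}(w)$ at any vertex $w$ along the continuation, nor does it prevent the unique forced continuation from $v$ from falling into a periodic tail (e.g.\ $v$ may feed, through a forced finite path, into a cycle without entrances whose base point is not isolated). ``Invoking topological freeness at the current source vertex'' is therefore not an available move, and the diagonalization over pairs $(i,j)$ never gets off the ground; you flag this yourself as the main obstacle but do not resolve it. The paper avoids the problem entirely by proving the contrapositive: if the aperiodic set is not dense, one writes a nonempty open set $N\subset\partial E$ disjoint from it as $N=\bigcup_{p<q}A_{p,q}$ with $A_{p,q}=\{\mu: \sigma^p(\mu)=\sigma^q(\mu)\}\cap N$ closed in $N$, applies the Baire category theorem to get a cylinder $Z(W)$ on which the period $(p_0,q_0)$ is \emph{uniform}, and then reads off from $Z(W)\subset A_{p_0,q_0}$ that $s(W)$ is an open set of base points of cycles without entrances (an entrance would produce a path $\alpha\nu\nu'\nu\nu'\cdots\in Z(W)\setminus A_{p_0,q_0}$). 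Some such uniformization is essential; without it the direct construction is circular.

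In the converse direction your reduction is mostly sound (finite boundary paths do have trivial isotropy, and every infinite path in $Z(V)$ with $r(V)\subset U\subset P$ is forced to be periodic), but the last step --- ``excise the finite boundary paths by intersecting with $Z(K)^c$ for a suitable compact $K$'' --- is asserted, not proved, and it is not clear it can be done for an arbitrary $v_0\in U$: the finite boundary paths in $Z(V)$ (forced prefixes terminating at singular vertices on the cycles) may have unbounded length and may accumulate at the periodic paths, in which case no single compact $K$ removes them while leaving a nonempty open set, and $\Omega$ would remain dense in $Z(V)$. The paper handles exactly this point by citing \cite[Proposition~6.12]{Katsura:ETDS06}, which yields a nonempty open set $V$ of base points of cycles without entrances with $V\subset E_{\mathrm{rg}}^0$; regularity is what guarantees the forced continuations are genuinely infinite, and the paper then argues with a convergent sequence of aperiodic paths rather than with cylinder sets. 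You need either that proposition or a substitute argument producing such a regular open set before your excision step can work.
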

\begin{proof}
First of all, suppose that $E$ is not essentially free. We aim to show that $E$ is not topologically free. Since $E$ is not essentially free, there exists a nonempty open set $N \subset \partial E$ which does not intersect with $\{\mu \in \partial E: \mu, \sigma(\mu), \dots \text{ are distinct } \}$ due to Lemma~\ref{equ defn of essentially free}. For each $0 \leq p<q$, define a closed subset of $\partial E$ to be $B_{p,q}:=\{\mu \in \partial E: \sigma^p(\mu)=\sigma^q(\nu)\}$. For each $0 \leq p<q$, let $A_{p,q}:=B_{p,q} \cap N$, then $A_{p,q}$ is a closed subset of the subspace $N$. Notice that $N=\bigcup_{0 \leq p<q}A_{p,q}$ because $N$ does not intersect with $\{\mu \in \partial E: \mu, \sigma(\mu), \dots \text{ are distinct } \}$. By the Baire's category theorem, there exists a nonempty open subset $O$ of $N$ ($O$ is also open in $\partial E$) contained in $A_{p_0,q_0}$ for some $1 \leq p_0 <q_0$. By Definition~\ref{def of boundary path of top graph}, there exist an open set $U \subset E^*$ and a compact set $K \subset E^*$ such that $\emptyset \neq Z(U) \cap Z(K)^c \subset O$. By Lemma~\ref{Technical Lemma 1}, there exist $n \geq 0$ and an open subset $V \subset E^n$ such that $\emptyset \neq Z(V) \subset O$. We may assume that $n \neq 0$. Let $W:=(V \times (r^p)^{-1}(s(V))) \cap E^{n+p_0}$. Then $Z(W) \subset Z(V)$. So $Z(W) \subset A_{p_0,q_0}$. We deduce that every $\mu \in Z(W)$ satisfies that $\sigma^{n+p_0}(\mu)=\sigma^{n+q_0}(\mu)$ since $\mu \in A_{p_0,q_0}$. \cite[Proposition~2.8]{Katsura:TAMS04} assures that for each $\alpha \in W$ there exists $\mu \in Z(W)$ such that $\mu=\alpha\beta$. We then conclude that $s(W)$ is an open subset of $E^0$ consisting of base points of cycles. We claim that $s(W)$ consists of base points of cycles without entrances. Suppose not, for a contradiction. We obtain two infinite paths $\alpha\beta,\alpha\beta' \in Z(W)$, where $\alpha \in W, \beta_1\neq \beta_2$. Since $\nu:=\beta_1\dots\beta_{q_0-p_0},\nu':=\beta_1'\dots\beta'_{q_0-p_0}$ are cycles, $\alpha\nu\nu'\nu\nu'\dots \in Z(W) \notin A_{p_0,q_0}$, which is a contradiction. Hence $s(W)$ is an open subset of $E^0$ consisting of base points of cycles. Therefore $E$ is not topologically free.

Conversely suppose that $E$ is essentially free. Suppose that $E$ is not topologically free, for a contradiction. By Lemma~\ref{equ defn of essentially free}, the set $\{\mu \in \partial E: \mu, \sigma(\mu), \dots \text{ are distinct } \}$ is dense in $\partial E$. By \cite[Proposition~6.12]{Katsura:ETDS06}, there exist a nonempty open set $V \subset E^0$ consisting of base points of cycles without entrances, and a homeomorphism $\sigma$ on $V$ such that $\sigma=r \circ (s\vert_{r^{-1}(V)})^{-1}$. By \cite[Proposition~2.8]{Katsura:TAMS04}, $V \subset E_{\mathrm{rg}}^0$. Fix a vertex $v \in V$. Let $\nu$ be the unique simple cycle such that $r(\nu)=v$. By the assumption, there is a convergent sequence $(\nu^{(n)})_{n=1}^{\infty} \subset \{\mu \in \partial E:  \mu, \sigma(\mu), \dots \text{ are distinct }\}$ with the limit $\nu$. By \cite[Lemma~4.8]{KumjianLiTwisted2}, $r(\nu^{(n)}) \to r(\nu)=v$, and so there exists $N \geq 1$ such that $r(\nu^{(N)}) \in V$. Let $\alpha$ be the unique simple cycle with $r(\alpha)=r(\nu^{(N)})$.  Since $V$ consists of base points of cycles without entrances and $V \subset E_{\mathrm{rg}}^0$, we deduce that $\nu^{(N)}=\alpha\alpha\cdots$, which is a contradiction. So $E$ is topologically free.
\end{proof}

\begin{defn}[{\cite[Definition~2.1]{MR1478030}}]\label{define loc contracting}
Let $\Gamma$ be an \'{e}tale groupoid. Then $\Gamma$ is said to be \emph{locally contracting} if for any nonempty open set $U \subset \Gamma^0$, there exist an open subset $V \subset U$ and an open bisection $N \subset \Gamma$ such that $\overline{V} \subset s(N)$ and $s \circ r \vert_{N^{-1}}^{-1}(\overline{V}) \subsetneqq V$.
\end{defn}

\begin{defn}\label{define loc contracting top graph}
Let $E$ be a topological graph. Then $E$ is said to be \emph{locally contracting} if the boundary path groupoid $\Gamma(\partial E,\sigma)$ is locally contracting.
\end{defn}

\begin{defn}
Let $E$ be a topological graph and let $v \in E^0$. Then $v$ is said to \emph{connect to a cycle} if there exists $u \in E^0$ such that $v \in \mathrm{Orb}^+(u)$ and $u$ is the base point of a cycle.
\end{defn}

The following theorem is a generalization of partial results from \cite[Theorem~3.9]{KumjianPaskEtAl:PJM98}.

\begin{thm}\label{sufficient condition 1 for boundary path groupoid loc con}
Let $E$ be a topologically free totally disconnected topological graph. Suppose that the following subset of $E^0$ is dense in $E^0$.
\begin{align*}
B:=\{v : &\text{ $v$ connects to a cycle $\mu$ with entrances such that any open neighborhood $N$ }
\\&\text{of $\mu$ contains an open neighborhood $U$ of $\mu$ with $r^{\vert\mu\vert}(U) \subset s^{\vert\mu\vert}(U)$} \}.
\end{align*}
Then $E$ is essentially free and locally contracting. Hence $\mathcal{O}(E)$ is purely infinite.
\end{thm}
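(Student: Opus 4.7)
The plan is to show that $E$ is both essentially free and locally contracting; the purely infinite conclusion then follows from Theorem~\ref{groupid model for twisted top graph alg} combined with the Anantharaman-Delaroche criterion of~\cite{MR1478030}. Essential freeness is immediate from Proposition~\ref{top free iff ess free} together with the topological freeness hypothesis, so the substantive work is the local contracting property.

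I would first observe that density of $B$ forces $E_{\mathrm{sce}}^0=\emptyset$, since every $v\in B$ lies in $r(E^1)$, so Lemma~\ref{Technical Lemma 2} is available. Fix a nonempty open $W\subset\partial E$. Combining Lemmas~\ref{Technical Lemma 1} and~\ref{Technical Lemma 2} with the total disconnectedness of $E^n$ (which supplies compact open $s$-section neighborhoods), one obtains a compact open $s$-section $V_0\subset E^n$ with $Z(V_0)\subset W$. Density of $B$ then yields $\alpha\in V_0$ with $s(\alpha)\in B$; let $\mu$ be the associated cycle of length $\ell$ with entrances (say $e\neq\mu_i$ and $r(e)=r(\mu_i)$) satisfying the defining neighborhood property of $B$, set $u:=r(\mu)=s(\mu)$, and let $\nu$ be a path with $r(\nu)=s(\alpha)$ and $s(\nu)=u$. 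Choose a compact open $s$-section neighborhood $N_\nu\ni\nu$ and put $D:=(V_0\times N_\nu)\cap E^{n+|\nu|}$; this is a compact open $s$-section containing $\alpha\nu$, with $s^{n+|\nu|}(D)$ an open neighborhood of $u$ in $E^0$.

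Iteratively applying the $B$-property to ever smaller neighborhoods of $\mu$, select a compact open $s$-section $U\ni\mu$ in $E^\ell$ satisfying $r^\ell(U)\subset s^\ell(U)\subset s^{n+|\nu|}(D)$ and $e\notin\pi_i(U)$ (where $\pi_i:E^\ell\to E^1$ is the $i$-th coordinate projection; the last condition uses Hausdorffness of $E^1$). Put $DU:=(D\times U)\cap E^{n+|\nu|+\ell}$ and $DU^2:=(D\times U\times U)\cap E^{n+|\nu|+2\ell}$, both compact open $s$-sections by construction; the chain $r^\ell(U)\subset s^\ell(U)\subset s^{n+|\nu|}(D)$ is precisely what makes these sets nonempty and ensures that every element of $Z(DU)$ admits a unique preimage in $Z(DU^2)$ under the shift. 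Define
\[
N:=\mathcal{U}(Z(DU^2),Z(DU),n+|\nu|+2\ell,n+|\nu|+\ell).
\]
A direct check shows $N$ is an open bisection with $s(N)=Z(DU)$ and $r(N)=Z(DU^2)$, and that $\phi_N:=r\circ(s|_N)^{-1}$ is a homeomorphism $Z(DU)\to Z(DU^2)$.

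Set $V:=Z(DU)$. Since $DU$ is clopen in $E^{n+|\nu|+\ell}$ (compact open in a Hausdorff space), the explicit description of basic open sets in Definition~\ref{def of boundary path of top graph} lets one verify that $Z(DU)$ is clopen in $\partial E$, so $\overline V=V\subset s(N)$ and $\phi_N(\overline V)=Z(DU^2)$. To finish one must show $Z(DU^2)\subsetneq Z(DU)$: by \cite[Proposition~2.8]{Katsura:TAMS04}, the finite path $\alpha\nu\mu\,\mu_1\cdots\mu_{i-1}e$ extends to a boundary path $\gamma$; then $\gamma\in Z(DU)$ since its first $n+|\nu|+\ell$ edges form $\alpha\nu\mu\in DU$, but its $(n+|\nu|+\ell+i)$-th edge is $e\notin\pi_i(U)$, so its segment from position $n+|\nu|+\ell+1$ to $n+|\nu|+2\ell$ does not lie in $U$, forcing $\gamma\notin Z(DU^2)$. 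The main obstacle is the bookkeeping: simultaneously arranging $r^\ell(U)\subset s^\ell(U)\subset s^{n+|\nu|}(D)$ and $e\notin\pi_i(U)$ by iteratively applying the $B$-property, and verifying clopenness of $Z(DU)$ to secure $\overline V\subset s(N)$, are the delicate technical points.
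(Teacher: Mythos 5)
Your proposal is correct and follows essentially the same route as the paper's own proof: deduce essential freeness from Proposition~\ref{top free iff ess free}, show $E_{\mathrm{sce}}^0=\emptyset$ so that Lemma~\ref{Technical Lemma 2} applies, use density of $B$ to locate a path from the given cylinder set into a cycle with entrances, build compact open $s$-sections whose ranges nest into sources via the $B$-condition while avoiding the entrance edge, and form the Renault--Deaconu bisection implementing ``one more trip around the cycle,'' with the entrance witnessing strictness of the contraction. The differences are cosmetic --- your bisection contracts $Z(DU)$ onto $Z(DU^2)$ where the paper contracts $s(S)\subset Z(O_1)$ onto a subset of $Z((O_1\times O_2)\cap E^{|\mu\nu\alpha|})$, and you allow the entrance at an arbitrary position of the cycle rather than at its base --- and you share with the paper the lightly justified step of upgrading the merely open neighborhood supplied by the $B$-condition to a compact open one with $r^{|\mu|}(U)\subset s^{|\mu|}(U)$.
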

\begin{proof}
Since every cycle of $E$ has entrances, $E$ is topologically free. So $E$ is essentially free by Proposition~\ref{top free iff ess free}.

We claim that $E_{\mathrm{sce}}^0=\emptyset$. Suppose that $E_{\mathrm{sce}}^0\neq\emptyset$, for a contradiction. Fix $v \in E_{\mathrm{sce}}^0$. Then there exists an open neighborhood $V$ of $v$ not intersecting with $\overline{r(E^1)}$. Since $B$ is dense in $E^0,V \cap B \neq \emptyset$ which is a contradiction. So $E_{\mathrm{sce}}^0=\emptyset$ and we finish proving the claim.

Now we prove that $E$ is locally contracting. Fix a nonempty open set $N \subset \partial E$. By Lemma~\ref{Technical Lemma 2} there exists a nonempty open subset $U$ of $E^*$ such that $\emptyset\neq Z(U) \subset N$. Then $s(U)$ is a nonempty open subset of $E^0$. Since $B$ is dense in $E^0$, there exist $\mu\nu e \beta, \mu\nu \alpha e\beta \in \partial E$ such that $\mu \in U, \alpha$ is a cycle, and $e \neq \alpha_1$. We may assume that $\vert \mu \nu \vert>0$ (the case $\vert \mu \nu \vert=0$ would follow a similar argument). Take a compact open $s^{\vert\mu\nu\vert}$-sections $O_1$ such that $\mu\nu \in O_1$ and $Z(O_1) \subset Z(U)$; and take compact open $s^{\vert\alpha\vert}$-sections $O_2$ such that $\alpha \in O_2, r^{\vert\alpha\vert}(O_2) \subset s^{\vert\mu\nu\vert}(O_1) \cap s^{\vert\alpha\vert}(O_2), e \notin \{\alpha_1':\alpha' \in O_2\}$. Define a compact open bisection of the boundary path groupoid $\Gamma(\partial E,\sigma)$ by $S:=\mathcal{U}(Z((O_1 \times O_2)\cap E^{\vert\mu\nu\alpha\vert}),Z(O_1),\vert\mu\nu\alpha\vert,\vert\mu\nu\vert)$. Define a nonempty compact open subset of $N$ by $W:=s(S)$. Then $\overline{W} \subset s(S)$. For any $\mu'\nu',\mu''\nu'' \in O_1, \alpha' \in O_2,\beta \in \partial E$ such that $(\mu'\nu'\alpha'\beta,\vert\mu\nu\alpha\vert-\vert\mu\nu\vert,\mu''\nu''\beta) \in S$, there exist unique $\mu''',\nu''' \in O_1, \alpha''' \in O_2$ such that $\mu'''\nu'''\alpha'''\alpha'\beta \in Z(Z((O_1 \times O_2)\cap E^{\vert\mu\nu\alpha\vert}))$. So $\mu'\nu'\alpha'\beta=s(\mu'''\nu'''\alpha'''\alpha'\beta,\vert\mu\nu\alpha\vert-\vert\mu\nu\vert,\mu'\nu'\alpha'\beta)\in W$. Hence $s \circ r_{S^{-1}}^{-1}(\overline{W}) \subset W$. Pick up an arbitrary $\gamma \in s(e)\partial E$ (see \cite[Proposition~2.8]{Katsura:TAMS04}). Then $\mu\nu e \gamma \in W$ but $\mu\nu e \gamma \notin s \circ r_{S^{-1}}^{-1}(\overline{W})$ because $e \notin \{\alpha_1':\alpha' \in O_2\}$. Therefore $s \circ r_{S^{-1}}^{-1}(\overline{W}) \subsetneqq W$. By Definition~\ref{define loc contracting top graph}, $\Gamma(\partial E,\sigma)$ is locally contracting.

By Theorem~\ref{groupid model for twisted top graph alg}, $\mathcal{O}(E)$ is isomorphic with $C^*(\Gamma(\partial E,\sigma))$. Since $E$ is essentially free and locally contracting, \cite[Proposition~2.4]{MR1478030} gives $\mathcal{O}(E)$ is purely infinite.
\end{proof}

\begin{cor}\label{totally disconn PI top graph alg}
Let $E$ be a totally disconnected topological graph such that $\mathcal{O}(E)$ is simple. Suppose that there exists a cycle $\mu$ with entrances such that any open neighborhood $N$ of $\mu$ contains an open neighborhood $U$ of $\mu$ with $r^{\vert\mu\vert}(U) \subset s^{\vert\mu\vert}(U)$. Then $E$ is essentially free and locally contracting. Hence $\mathcal{O}(E)$ is purely infinite.
\end{cor}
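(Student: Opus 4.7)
\emph{Plan.} The strategy is to reduce to Theorem~\ref{sufficient condition 1 for boundary path groupoid loc con} by verifying that the set $B$ appearing there is already dense in $E^0$ under the present hypotheses. Since $\mathcal{O}(E)$ is simple, Theorem~\ref{simplicity} immediately yields that $E$ is topologically free and cofinal; topological freeness is one of the hypotheses of Theorem~\ref{sufficient condition 1 for boundary path groupoid loc con}, so only the density of $B$ remains to be established, and cofinality will do the work.

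To produce density, I would feed the periodic boundary path $\xi := \mu\mu\mu\cdots \in E^\infty \subset \partial E$ into the definition of cofinality. Writing $\mu = \mu_1 \mu_2 \cdots \mu_k$ with $k = \vert\mu\vert$, the cycle relation $s(\mu_k) = s(\mu) = r(\mu) = r(\mu_1)$ gives $\{s(\xi_i) : i \geq 1\} = \{r(\mu_1), r(\mu_2), \dots, r(\mu_k)\}$ and $r(\xi) = r(\mu)$. For each $1 \leq j \leq k$, the truncated path $\mu_j \mu_{j+1} \cdots \mu_k$ goes from $r(\mu_j)$ to $r(\mu)$, so $r(\mu_j) \in \mathrm{Orb}^+(r(\mu))$, and concatenating with witnessing paths upgrades this to $\mathrm{Orb}^+(r(\mu_j)) \subset \mathrm{Orb}^+(r(\mu))$. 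Therefore the cofinality-dense union $\mathrm{Orb}^+(r(\xi)) \cup \bigcup_{i=1}^\infty \mathrm{Orb}^+(s(\xi_i))$ is contained in $\mathrm{Orb}^+(r(\mu))$, and this single orbit is itself dense in $E^0$.

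To finish, every $v \in \mathrm{Orb}^+(r(\mu))$ connects (via the witnessing path) to the cycle $\mu$ itself, and $\mu$ is assumed to have entrances together with the open-neighborhood property in the corollary's hypothesis. Hence $\mathrm{Orb}^+(r(\mu)) \subset B$, so $B$ is dense in $E^0$, and Theorem~\ref{sufficient condition 1 for boundary path groupoid loc con} applied to the topologically free totally disconnected graph $E$ delivers essential freeness, local contractivity, and pure infiniteness of $\mathcal{O}(E)$. I do not expect a genuine obstacle here; the only moderately delicate point is the bookkeeping that identifies the cofinality-dense union with a single orbit, and this is a direct consequence of the cycle relation $s(\mu) = r(\mu)$.
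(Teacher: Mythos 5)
Your proposal is correct and follows essentially the same route as the paper: simplicity gives topological freeness (hence essential freeness) and cofinality via Theorem~\ref{simplicity}, cofinality applied to the periodic boundary path $\mu\mu\mu\cdots$ shows that $\bigcup_{i}\mathrm{Orb}^+(s(\mu_i))\subset\mathrm{Orb}^+(r(\mu))\subset B$ is dense, and Theorem~\ref{sufficient condition 1 for boundary path groupoid loc con} finishes the argument. Your write-up merely makes explicit the bookkeeping (which infinite path is fed into cofinality, and why the resulting orbits all land in $B$) that the paper leaves implicit.
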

\begin{proof}
Since $\mathcal{O}(E)$ is simple, by Theorem~\ref{simplicity} $E$ is topologically free. By Proposition~\ref{top free iff ess free} $E$ is essentially free. Fix $v \in E^0$ and fix an open neighborhood $V$ of $v$. Since $\mathcal{O}(E)$ is simple, by Theorem~\ref{simplicity} $E$ is cofinal. So $\bigcup_{i=1}^{\vert \mu \vert} \mathrm{Orb}^+(s(\mu_i))$ is dense in $E^0$. Hence $V \cap  \Big(\bigcup_{i=1}^{N} \mathrm{Orb}^+(s(\mu_i)) \Big) \neq \emptyset$. Thus $V \cap B\neq\emptyset$ (see Theorem~\ref{sufficient condition 1 for boundary path groupoid loc con}). Theorem~\ref{sufficient condition 1 for boundary path groupoid loc con} implies that $E$ is locally contracting. Therefore \cite[Proposition~2.4]{MR1478030} yields that $\mathcal{O}(E)$ is purely infinite.
\end{proof}

\section{Concluding Remarks}

Katsura in \cite{Katsura:JFA08} defined a concept called contracting topological graphs which can provide purely infinite topological graph algebras. We recall the definition of contracting topological graphs and state Katsura's result.

\begin{defn}[{\cite[Definition~2.3]{Katsura:JFA08}}]\label{define contracting top graph}
Let $E$ be a topological graph. A nonempty precompact open set $V \subset E^0$ is said to be \emph{contracting} if there exists a finite family of nonempty open sets $\{U_i \subset E^{n_i}:n_i \geq 1\}_{i=1}^{k}$ satisfying the following.
\begin{enumerate}
\item\label{condition 1} $r(U_i) \subset V, i=1,\dots,k$;
\item for $i \neq j, n_i \leq n_j$, we have $\{(\mu_1,\dots,\mu_{n_i}): (\mu_1,\dots,\mu_{n_i}) \in U_i,\mu \in E^{n_j}\}=\emptyset$; and
\item\label{condition 3} $\overline{V} \subsetneqq \bigcup_{i=1}^{k}s(U_i)$.
\end{enumerate}
Moreover, $E$ is said to be \emph{contracting} at a vertex $v \in E^0$ if $\overline{\mathrm{Orb}^+(v)}=E^0$ and every open neighborhood of $v$ contains a contracting open set. Furthermore, $E$ is said to be \emph{contracting} if $E$ is contracting at some vertex in $E^0$.
\end{defn}

\begin{thm}[{\cite[Theorem~A]{Katsura:JFA08}}]\label{contracting minimal gives PI}
Let $E$ be a topological graph. Suppose that $\mathcal{O}(E)$ is simple. If $E$ is contracting then $\mathcal{O}(E)$ is purely infinite.
\end{thm}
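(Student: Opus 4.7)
The plan is to apply Anantharaman-Delaroche's criterion (\cite[Proposition~2.4]{MR1478030}), which guarantees that the $C^*$-algebra of an essentially free and locally contracting \'etale groupoid is purely infinite; combined with Theorem~\ref{groupid model for twisted top graph alg}, this will yield pure infiniteness of $\mathcal{O}(E)$. Essential freeness of $\Gamma(\partial E,\sigma)$ is immediate from Proposition~\ref{top free iff ess free}, since $E$ is topologically free by hypothesis. The real work lies in verifying that $\Gamma(\partial E,\sigma)$ is locally contracting.

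Before attacking local contracting, I need one preliminary observation: the density of $B$ forces $E_{\mathrm{sce}}^0 = \emptyset$. Indeed, every element of $B$ connects to a cycle and therefore lies in $\overline{r(E^1)}$; if some $v \in E_{\mathrm{sce}}^0$ existed, a neighborhood of $v$ disjoint from $\overline{r(E^1)}$ would meet $B$ trivially, contradicting density. With $E_{\mathrm{sce}}^0 = \emptyset$ in hand, Lemma~\ref{Technical Lemma 2} lets me shrink any nonempty open set in $\partial E$ to a cylinder $Z(U)$ for some nonempty open $U \subset E^*$, and total disconnectedness of $E^0, E^1$ (together with $s$ being a local homeomorphism) allows me to refine $U$ further to a compact open $s$-section.

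To construct the witnessing bisection for a given nonempty open $N \subset \partial E$, I apply density of $B$ to the nonempty open set $s(U) \subset E^0$ to find a path $\mu\nu$ with $\mu \in U$ whose source is the base point of a cycle $\alpha$ with entrances satisfying the shrinking property. Fixing an entrance $e \neq \alpha_1$, I choose a compact open $s^{|\mu\nu|}$-section $O_1 \ni \mu\nu$ with $Z(O_1) \subset Z(U)$, and a compact open $s^{|\alpha|}$-section $O_2 \ni \alpha$ small enough that $r^{|\alpha|}(O_2) \subset s^{|\mu\nu|}(O_1) \cap s^{|\alpha|}(O_2)$ and the first-edge projection of $O_2$ misses $e$. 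The bisection is then
\[
S := \mathcal{U}\bigl(Z((O_1 \times O_2) \cap E^{|\mu\nu\alpha|}),\, Z(O_1),\, |\mu\nu\alpha|,\, |\mu\nu|\bigr),
\]
and I set $W := s(S) = Z(O_1)$; since $O_1$ is compact open, $W$ is compact open, so $\overline{W} = W \subset s(S)$ is automatic.

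The main obstacle is establishing the strict inclusion $s \circ r|_{S^{-1}}^{-1}(\overline{W}) \subsetneqq W$. The inclusion $\subseteq$ should follow mechanically from unwinding the definitions: any element on the left must be obtained by extending a path in $O_1$ through a loop in $O_2$, and hence lies in $Z(O_1) = W$. Strictness is where the entrance $e \neq \alpha_1$ earns its keep: for any $\gamma \in s(e)\partial E$ (nonempty by \cite[Proposition~2.8]{Katsura:TAMS04}), the path $\mu\nu e\gamma$ lies in $W$, but it cannot lie in the image, because image points have their $(|\mu\nu|+1)$-th edge in the first-edge projection of $O_2$, which was arranged to exclude $e$. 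This produces the required witness, verifies local contracting per Definition~\ref{define loc contracting}, and completes the proof.
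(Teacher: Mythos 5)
There is a fundamental mismatch here: you have written a proof of the paper's main result, Theorem~\ref{sufficient condition 1 for boundary path groupoid loc con}, not of the statement actually in question. The hypotheses you invoke throughout --- density of the set $B$ of vertices connecting to cycles with entrances, total disconnectedness of $E^0$ and $E^1$ (needed for your compact open $s$-sections $O_1, O_2$), and topological freeness as a standing assumption --- are the hypotheses of that theorem, not of Theorem~\ref{contracting minimal gives PI}. The latter assumes only that $\mathcal{O}(E)$ is simple and that $E$ is \emph{contracting} in Katsura's sense (Definition~\ref{define contracting top graph}), which is a different condition involving a precompact open $V \subset E^0$ and a finite family $\{U_i \subset E^{n_i}\}$ with $\overline{V} \subsetneqq \bigcup_i s(U_i)$. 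Neither density of $B$ nor total disconnectedness follows from these hypotheses.

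This is not merely a bookkeeping issue; your argument genuinely cannot be run under the stated hypotheses. The paper's own Example in Section~4 (the dual graph $\widehat{E}$ of the full one-sided shift on two symbols) is a contracting topological graph with $\mathcal{O}(\widehat{E}) \cong \mathcal{O}_2$ simple, in which \emph{every cycle has no entrances}. For that graph $B = \emptyset$, so your appeal to ``density of $B$'' to produce the paths $\mu\nu e\beta$ and $\mu\nu\alpha e\beta$ with $\alpha$ a cycle and $e \neq \alpha_1$ an entrance --- the step on which both the construction of the bisection $S$ and the strictness of the inclusion $s \circ r|_{S^{-1}}^{-1}(\overline{W}) \subsetneqq W$ depend --- fails at the outset. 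The paper includes precisely this example to show that the hypotheses of Corollary~\ref{totally disconn PI top graph alg} and of Theorem~\ref{contracting minimal gives PI} are incomparable. Note also that the paper does not prove Theorem~\ref{contracting minimal gives PI} at all: it is quoted from Katsura \cite[Theorem~A]{Katsura:JFA08}, whose proof proceeds through the Cuntz--Pimsner correspondence machinery rather than the groupoid model. A correct groupoid-theoretic proof along your lines would have to extract a locally contracting bisection directly from Katsura's contracting data $\{U_i\}$, without assuming the existence of any cycle with entrances, and without total disconnectedness; none of that is addressed in your proposal.
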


We provide two examples which indicate that both of pure infiniteness conditions for simple totally disconnected topological graph algebras in Corollary~\ref{totally disconn PI top graph alg} and Theorem~\ref{contracting minimal gives PI} are not comparable.

\begin{eg}
Define $E^0:=\{v\}; E^1:=\{e_0,e_1\}$. Then $\mathcal{O}(E) \cong \mathcal{O}_2$ which is simple and purely infinite. Notice that $e_0$ is a cycle with entrances and $\{e_0\}$ is an open neighborhood of $e_0$ with $r(\{e_0\}) \subset s(\{e_0\})$. So the assumption of Corollary~\ref{totally disconn PI top graph alg} is satisfied. However, it is easy to see that this graph is not contracting. Therefore the assumption of Theorem~\ref{contracting minimal gives PI} is not satisfied. Furthermore, Katsura in \cite[Remark~2.8]{Katsura:JFA08} asked that whether the converse of Theorem~\ref{contracting minimal gives PI} is true and this is a counterexample of the question.
\end{eg}

\begin{eg}
Define $E^0:=\{v\}; E^1:=\{e_0,e_1\}$. This time we consider the dual graph of $\widehat{E}:=(\prod_{n=1}^{\infty}\{0,1\},\prod_{n=1}^{\infty}\{0,1\},\id,\sigma)$ where $\sigma$ is the one-sided shift. By \cite[Theorem~7.5]{KumjianLiTwisted2}, $\mathcal{O}(\widehat{E}) \cong \mathcal{O}(E)$ so $\mathcal{O}(\widehat{E})$ is simple and purely infinite. Since every cycle of $\widehat{E}$ has no entrances, the assumption of Corollary~\ref{totally disconn PI top graph alg} is not satisfied. On the other hand, $\overline{\mathrm{Orb}^+(000 \dots)}=\widehat{E}^0$ and $Z(\underbrace{0 \cdots 0}_{n}) \subset \widehat{E}^0$ is contracting for all $n \geq 1$. To see that $Z(\underbrace{0 \cdots 0}_{n})$ is contracting, let $U_1:=Z(\underbrace{0 \cdots 0}_{n+1}) \subset \widehat{E}^1, U_2:=Z(\underbrace{0 \cdots 0}_{n}1)\subset \widehat{E}^1$. It is straightforward to check that $\{U_1,U_2\}$ satisfies Conditions~(\ref{condition 1})--(\ref{condition 3}) of Definition~\ref{define contracting top graph}. So $Z(\underbrace{0 \cdots 0}_{n})$ is contracting. Hence $E$ is contracting and the assumption of Theorem~\ref{contracting minimal gives PI} is satisfied.
\end{eg}

Yeend in \cite{Yeend:CM06} showed that topological $1$-graph $C^*$-algebras coincide with topological graph algebras (see also \cite{KumjianLiTwisted2}). Later, Renault, Sims, Williams, and Yeend in \cite{RSWY} provided a sufficient condition for simple compactly-aligned topological higher-rank graph $C^*$-algebras to be purely infinite. In the following we interpret their condition in the topological graph setting and present their result.

\begin{defn}[{\cite[Definition~5.7]{RSWY}}]\label{define contracting top 1-graph}
Let $E$ be a topological graph. A nonempty precompact open set $U \subset E^0$ is said to be \emph{contracting} if there exist $0 \leq n<m$, a nonempty precompact open $s^n$-section $Y_n$, and a nonempty precompact open $s^m$-section $Y_m$, such that
\begin{enumerate}
\item $s^m(Y_m)=s^n(Y_n)$;
\item $\overline{r(Y_m)} \subset r(Y_n)=U$;
\item for $\mu \in Y_m,\nu \in Y_n$ with $r^m(\mu)=r^n(\nu)$, there exists $\gamma \in E^{m-n}$ such that $\mu=\nu\gamma$;
\item there exists a nonempty open subset $W$ of $Y_n E^*$ such that $\{\nu_1\dots\nu_n:\nu \in W\}=Y_n$ and that for $\mu \in Y_m,\nu \in W$ there is no $\gamma \in E^*$ satisfying $\mu=\nu\gamma$ or $\nu=\mu\gamma$.
\end{enumerate}
\end{defn}

\begin{thm}[{\cite[Proposition~5.8]{RSWY}}]\label{purely inf top 1-graph}
Let $E$ be a topological graph. Suppose that $\mathcal{O}(E)$ is simple and that for any $v \in E^0$, there exist $n \geq 0$ and an open subset $U$ of $E^n$ satisfying that $v \in r(U)$ and $s(U)$ is contracting in the sense of Definition~\ref{define contracting top 1-graph}. Then $\mathcal{O}(E)$ is purely infinite.
\end{thm}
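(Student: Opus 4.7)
The plan is to follow the same blueprint as in Theorem~\ref{sufficient condition 1 for boundary path groupoid loc con} and Corollary~\ref{totally disconn PI top graph alg}: transport the problem to the boundary path groupoid via Theorem~\ref{groupid model for twisted top graph alg}, show that $\Gamma(\partial E,\sigma)$ is essentially free and locally contracting, and then invoke Anantharaman-Delaroche's criterion \cite[Proposition~2.4]{MR1478030}.

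Essential freeness is immediate: simplicity of $\mathcal{O}(E)$ forces $E$ to be topologically free by Theorem~\ref{simplicity}, and Proposition~\ref{top free iff ess free} upgrades this to essential freeness of $\Gamma(\partial E,\sigma)$.

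For local contraction I would fix a nonempty open $N \subset \partial E$ and first use Lemma~\ref{Technical Lemma 1} to shrink $N$ to a cylinder $Z(O) \subset N$ for some nonempty open $O \subset E^k$. Pick a vertex $v \in s^k(O) \subset E^0$; by hypothesis there are $\ell \geq 0$ and an open $P \subset E^\ell$ with $v \in r^\ell(P)$ and $s^\ell(P)$ contracting in the sense of Definition~\ref{define contracting top 1-graph}. The contracting data supplies $0 \leq n < m$, precompact open sections $Y_n, Y_m$ with $s^m(Y_m) = s^n(Y_n)$ and $\overline{r^m(Y_m)} \subset r^n(Y_n) = s^\ell(P)$, the factorization condition~(3), and the auxiliary open set $W$ of condition~(4). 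After shrinking $O$ and $P$ so that their sources and ranges line up with the data above, I would form the open sets $(O \times P \times Y_n) \cap E^{k+\ell+n}$ and $(O \times P \times Y_m) \cap E^{k+\ell+m}$ of composable paths and define the open bisection
\[
 S := \mathcal{U}\bigl(Z\bigl((O \times P \times Y_m)\cap E^{k+\ell+m}\bigr),\, Z\bigl((O \times P \times Y_n)\cap E^{k+\ell+n}\bigr),\, k+\ell+m,\, k+\ell+n\bigr)
\]
in $\Gamma(\partial E,\sigma)$. Following the template in the proof of Theorem~\ref{sufficient condition 1 for boundary path groupoid loc con}, I would take $V$ to be a precompact open subset of $s(S) \subset N$ with $\overline{V} \subset s(S)$; the factorization~(3) together with the equality $s^m(Y_m) = s^n(Y_n)$ forces $s \circ r\vert_{S^{-1}}^{-1}(\overline{V}) \subset V$, and condition~(4) produces an element of $V$ inside $Z((O \times P \times W) \cap E^*)$ that lies outside $s \circ r\vert_{S^{-1}}^{-1}(\overline{V})$, giving the required strict containment. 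Combining these two verifications with Theorem~\ref{groupid model for twisted top graph alg} and \cite[Proposition~2.4]{MR1478030} then yields that $\mathcal{O}(E)$ is purely infinite.

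The hard part will be the set-up of the concatenation: the four pieces $O$, $P$, $Y_n$, $Y_m$ must line up compatibly in source and range and produce genuine open sets of composable paths. This rests on the closure containment $\overline{r^m(Y_m)} \subset r^n(Y_n) = s^\ell(P)$, on the $s$-section hypothesis on $Y_n, Y_m$, and on a bit of extra shrinking of $O$ and $P$ so that $r^\ell(P)$ meets $s^k(O)$ on an open set. Once the bisection $S$ is built, condition~(4) of Definition~\ref{define contracting top 1-graph} is tailor-made to supply the strict containment: it furnishes paths in $Y_n E^*$ whose initial $n$ coordinates fill $Y_n$ but which are incomparable under extension with any path in $Y_m$, so the preimage $s \circ r\vert_{S^{-1}}^{-1}(\overline{V})$ cannot recover them.
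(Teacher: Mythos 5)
The first thing to note is that the paper does not prove this statement at all: Theorem~\ref{purely inf top 1-graph} is quoted from \cite[Proposition~5.8]{RSWY} and appears in the concluding section only so that it can be compared with the paper's own criteria (Theorem~\ref{sufficient condition 1 for boundary path groupoid loc con} and Corollary~\ref{totally disconn PI top graph alg}). There is therefore no in-paper proof to measure your attempt against, and I can only assess it on its own terms.

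Your overall strategy --- pass to $\Gamma(\partial E,\sigma)$ via Theorem~\ref{groupid model for twisted top graph alg}, get essential freeness from simplicity via Theorem~\ref{simplicity} and Proposition~\ref{top free iff ess free}, prove local contraction from the contracting hypothesis, and apply \cite[Proposition~2.4]{MR1478030} --- is plausible, and the essential-freeness half is fine. The local-contraction half, however, has two genuine gaps. First, reducing an arbitrary nonempty open $N\subset\partial E$ to a cylinder $Z(O)$ with $O\subset E^{k}$ open is not free: Lemma~\ref{Technical Lemma 1} only applies to boundary paths of length at least $k$ (where $K\subset\bigcup_{i\leq k}E^{i}$), and the general case is Lemma~\ref{Technical Lemma 2}, which requires $E_{\mathrm{sce}}^0=\emptyset$ --- a condition you neither assume nor derive from simplicity plus the contracting hypothesis. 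Second, and more seriously, the template you are copying from Theorem~\ref{sufficient condition 1 for boundary path groupoid loc con} leans on total disconnectedness: there the sections $O_1,O_2$ are taken compact open, so $W=s(S)$ is clopen, $\overline{W}=W$, and the containment $s\circ r\vert_{S^{-1}}^{-1}(\overline{W})\subset W$ only requires the image to land back in $s(S)$. Theorem~\ref{purely inf top 1-graph} carries no total disconnectedness hypothesis, so you must instead choose $V$ with $\overline{V}\subset s(S)$ so that the contraction map sends $\overline{V}$ back \emph{into $V$ itself}; this is precisely what the precompactness and the closure condition $\overline{r(Y_m)}\subset r(Y_n)$ in Definition~\ref{define contracting top 1-graph} are designed for, but your sketch never actually deploys them for this purpose. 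Moreover, your proposed ``extra shrinking of $O$ and $P$'' is dangerous: conditions (1)--(4) of Definition~\ref{define contracting top 1-graph} are stated for the specific sets $Y_n$, $Y_m$, $r(Y_n)=s(U)$, and are not obviously stable under replacing $P$ by a smaller open set --- shrinking $P$ shrinks $s^{\ell}(P)$, which breaks $r^{n}(Y_n)=s^{\ell}(P)$, and repairing that by shrinking $Y_n$ may in turn break $s^{m}(Y_m)=s^{n}(Y_n)$ and condition (4). Until these points are resolved the argument is incomplete.
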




The disadvantage of this theorem is that in order to prove the pure infiniteness of a topological graph algebra, one has to check the contracting condition for every vertex.

Overall, both criteria of Katsura and Renault-Sims-Williams-Yeend for topological graph algebras being purely infinite relies on the assumption of the given topological graph algebras being simple. Our approach does not but pay the price that we have to assume that the starting topological graphs are totally disconnected.

Finally, we consider a compact totally disconnected topological graph $E$ such that the range, source map are surjective and $\mathcal{O}(E)$ is simple. Schafhauser recently proved that $\mathcal{O}(E)$ is finite if and only if the source map is injective (see \cite[Theorem~6.7]{MR3354440}). On the other hand, it is hard to get a completely graphic characterization for $\mathcal{O}(E)$ to be purely infinite. However, by combining the recent work of Brown, Clark, and Sierakowski in \cite{MR3456600} and the work of Kumjian and Li \cite{KumjianLiTwisted2}, we are able to reduce the problem by only considering the projections on the vertex space. More precisely, $\mathcal{O}(E)$ is purely infinite if and only if every nonzero projection on $C(E^0)$ is infinite.


\begin{thebibliography}{15}
\bibitem{MR1478030} C. Anantharaman-Delaroche, \emph{Purely infinite {$C^*$}-algebras arising from dynamical systems}, Bull. Soc. Math. France \textbf{125} (1997), 199--225.
\bibitem{MR3456600} J. Brown, L.O. Clark, and A. Sierakowski, \emph{Purely infinite {$C^\ast$}-algebras associated to \'etale groupoids}, Ergodic Theory Dynam. Systems \textbf{35} (2015), 2397--2411.
\bibitem{BCW} N. Brownlowe, T.M. Carlsen, and M.F. Whittaker, \emph{Graph algebras and orbit equivalence}, preprint, arXiv:1410.2308.
\bibitem{Katsura:TAMS04} T. Katsura, \emph{A class of {$C^\ast$}-algebras generalizing both graph
    algebras and homeomorphism {$C^\ast$}-algebras {I}. {F}undamental results}, Trans. Amer. Math.
    Soc. \textbf{356} (2004), 4287--4322.
\bibitem{Katsura:ETDS06} T. Katsura, \emph{A class of {$C\sp *$}-algebras generalizing both graph algebras and homeomorphism {$C\sp *$}-algebras {III}. {I}deal structures}, Ergodic Theory Dynam. Systems \textbf{26} (2006), 1805--1854.
\bibitem{Katsura:JFA08} T. Katsura, \emph{A class of {$C\sp *$}-algebras generalizing both graph algebras and homeomorphism {$C\sp *$}-algebras. {IV}. {P}ure infiniteness}, J. Funct. Anal. \textbf{254} (2008), 1161--1187.
\bibitem{MR2563503} T. Katsura, \emph{Cuntz-{K}rieger algebras and {$C^\ast$}-algebras of topological graphs}, Acta Appl. Math. \textbf{108} (2009), 617--624.
\bibitem{Katsura:JFA04} T. Katsura, \emph{On {$C\sp *$}-algebras associated with {$C\sp *$}-correspondences}, J. Funct. Anal. \textbf{217} (2004), 366--401.
\bibitem{Kirchbergclassification} E. Kirchberg, \emph{The classification of purely infinite $C^*$-algebras using Kasparov's theory}, preprint, 1994.
\bibitem{KumjianLiTwisted2} A. Kumjian and H. Li, \emph{Twisted topological graph algebras are twisted groupoid $C^*$-algebras}, J. Operator Theory, to appear, arXiv:1507.04449.
\bibitem{KumjianPaskEtAl:PJM98} A. Kumjian, D. Pask, and I. Raeburn, \emph{Cuntz-{K}rieger algebras of directed graphs}, Pacific J. Math. \textbf{184} (1998), 161--174.
\bibitem{KumjianPaskEtAl:JFA97} A. Kumjian, D. Pask, I. Raeburn, and J. Renault, \emph{Graphs, groupoids, and {C}untz-{K}rieger algebras}, J. Funct. Anal. \textbf{144} (1997), 505--541.
\bibitem{Lance:Hilbert$C*$-modules95} E.C. Lance, Hilbert {$C\sp *$}-modules, A toolkit for operator algebraists, Cambridge University Press, Cambridge, 1995, x+130.
\bibitem{Phillips:DM00} N.C. Phillips, \emph{A classification theorem for nuclear purely infinite simple {$C\sp *$}-algebras}, Doc. Math. \textbf{5} (2000), 49--114.
\bibitem{Pimsner:FIC97} M.V. Pimsner, \emph{A class of {$C^*$}-algebras generalizing both
    {C}untz-{K}rieger algebras and crossed products by {${\bf Z}$}}, Fields Inst. Commun., 12, Free
    probability theory ({W}aterloo, {ON}, 1995), 189--212, Amer. Math. Soc., Providence, RI, 1997.
\bibitem{RaeburnWilliams:Moritaequivalenceand98} I. Raeburn and D.P. Williams, Morita equivalence and continuous-trace {$C\sp *$}-algebras, American Mathematical Society, Providence, RI, 1998, xiv+327.
\bibitem{Renault:groupoidapproachto80} J. Renault, A groupoid approach to {$C\sp{\ast} $}-algebras, Springer, Berlin, 1980, ii+160.
\bibitem{Renault:00}J. Renault, \emph{Cuntz-like algebras}, Operator theoretical methods (Timi\c soara, 1998), 371--386, Theta Found., Bucharest, 2000.
\bibitem{RSWY} J. Renault, A. Sims, D.P. Williams, and T.Yeend, \emph{Uniqueness theorems for topological higher-rank graph $C^*$-algebras}, preprint, arXiv:0906.0829.
\bibitem{RordamStormer:Classificationofnuclear02} M. R{\o}rdam and E. St{\o}rmer, Classification of nuclear {$C\sp *$}-algebras {E}ntropy in operator algebras, Operator Algebras and Non-commutative Geometry, 7, Springer-Verlag, Berlin, 2002, x+198.
\bibitem{MR3354440} C.P. Schafhauser, \emph{Finiteness properties of certain topological graph algebras}, Bull. Lond. Math. Soc. \textbf{47} (2015), 443--454.
\bibitem{Yeend:CM06}T. Yeend,\emph{Topological higher-rank graphs and the {$C\sp *$}-algebras of topological 1-graphs}, Contemp. Math., 414, Operator theory, operator algebras, and applications, 231--244, Amer. Math. Soc., Providence, RI, 2006.
\end{thebibliography}
\end{document}